\theoremstyle{plain} 
\newtheorem{thm}{Theorem}[section] 
\newtheorem*{thm*}{Main Theorem}
\newtheorem{lem}[thm]{Lemma}
\theoremstyle{definition}
\newtheorem{defn}[thm]{Definition}
\newtheorem{rem}[thm]{Remark}
\newtheorem{ex}[thm]{Example}
\numberwithin{equation}{section}
\renewcommand{\theta}{\vartheta}
\renewcommand{\phi}{\varphi}
\renewcommand{\epsilon}{\varepsilon}
\renewcommand{\subset}{\subseteq}
\newcommand{\N}{\mathbb N}
\newcommand{\Z}{\mathbb Z}
\newcommand{\C}{\mathbb C}
\newcommand{\norm}[1]{\lVert {#1}\rVert}
\newcommand{\Aut}{G_{\textnormal{aut}}}
\newcommand{\AAut}{A^+_{\textnormal{aut}}}
\newcommand{\QBan}{G_{\textnormal{aut}}^+}
\begin{document} 
\title{Existence of quantum symmetries for graphs on up to seven vertices: a computer based approach}
\author{Christian Eder, Viktor Levandovskyy, Julien Schanz, Simon Schmidt, Andreas Steenpass, Moritz Weber}
\address{Ch. E., 
Universit\"at Leipzig,
Fakult\"at f\"ur Mathematik und Informatik,
Augustusplatz 10,
04109 Leipzig, Germany}
\email{ederc@mathematik.uni-kl.de}

\address{V. L., 
RWTH Aachen University, 
Lehrstuhl D f\"ur Mathematik,
D-52062 Aachen, Germany} 
\email{viktor.levandovskyy@math.rwth-aachen.de}

\address{J. S., S. S., M. W., Saarland University, Fachbereich Mathematik, Postfach 151150,
66041 Saarbr\"ucken, Germany}
\email{s8juscha@stud.uni-saarland.de}
\email{simon.schmidt@math.uni-sb.de}
\email{weber@math.uni-sb.de}

\address{A. St., TU Kaiserslautern, Fachbereich Mathematik, Gottlieb-Daimler-Stra\ss e, Geb\"aude 48, Raum 425,
67663 Kaiserslautern, Germany}
\email{steenpass@mathematik.uni-kl.de}

\thanks{This work has been supported by the SFB-TRR 195. In particular, we would like to acknowledge the workshop \emph{Introduction to computer algebra systems}, Saarbr\"ucken 2017, funded by the SFB-TRR 195, where this work was initiated. Simon Schmidt and Moritz Weber were supported by the DFG grant \emph{Quantenautomorphismen von Graphen}.}
\date{\today}

\begin{abstract}
The symmetries of a finite graph are described by its automorphism group; in the setting of Woronowicz's quantum groups, a notion of a quantum automorphism group has been defined by Banica capturing the quantum symmetries of the graph. In general, there are more quantum symmetries than symmetries and it is a non-trivial task to determine when this is the case for a given graph: The question is whether or not the algebra associated to the quantum automorphism group is commutative. We use Gr\"obner base computations in order to tackle this problem; the implementation uses \textsc{Gap} and the \textsc{Singular} package \textsc{Letterplace}. We determine the existence of quantum symmetries for all connected, undirected graphs without multiple edges and without self-edges, for up to seven vertices. As an outcome, we infer within our regime that a classical automorphism group of order one or two is an obstruction for the existence of quantum symmetries.
\end{abstract}

\maketitle

\section{Introduction}
Given a finite graph $\Gamma=(V,E)$ on $n$ vertices having no multiple edges (i.e. we have $E\subset V\times V$), we denote its adjacency matrix by $\epsilon\in M_n(\{0,1\})$. The 
automorphism group is a subgroup of the symmetric group $S_n$ given by
\[\Aut(\Gamma) = \left\{ \sigma \in S_n\; |\; \sigma \varepsilon = \varepsilon\sigma \right\}\subset S_n.\] 

In the framework of  compact 
matrix quantum groups, which were introduced by Woronowicz in~\cite{woronowicz1987}, Wang \cite{wang} defined the quantum symmetric group $S_n^+$ in terms of its associated  universal unital $C^*$-algebra
\[C(S_n^+) := C^*\langle u_{ij}, 1\leq i,j\leq n\;|\; u_{ij} = u_{ij}^* = u_{ij}^2, \sum_{k=1}^{n} u_{ik} = \sum_{k=1}^n u_{ki} = 1\rangle.\]
The quantum automorphism group of $\Gamma$ has been defined by Banica \cite{banica} via
\[C(\QBan(\Gamma)) := C^*\langle u_{ij}, 1\leq i,j\leq n\;|\; u_{ij} = u_{ij}^*=u_{ij}^2, \sum_{k=1}^n u_{ik} = \sum_{k=1}^n u_{ki} = 1, u \varepsilon = \varepsilon u\rangle.\]
If we interpret $\Aut(\Gamma)$ as a compact matrix quantum group, we see, that 
\[\Aut(\Gamma) \subseteq \QBan(\Gamma)\]
 holds for all graphs $\Gamma$.
The question  is, whether this is a strict inclusion. If it is so, we say that the graph has quantum symmetries. This is the case if and only if the algebra $C(\QBan(\Gamma))$ is non-commutative.
For many graphs it is not known, whether they have quantum symmetries -- we even do not know whether an asymmetric graph (i.e. $\Aut(\Gamma)=\{e\}$) can have quantum symmetries. There is some ``asymptotic'' evidence that no asymmetric graph has quantum symmetries, see \cite{LMR} and our results below support this hypothesis for small graphs.

This article reports on a computer based approach to the question of existence of quantum symmetries for a given graph $\Gamma=(V,E)$.
 We implemented in \textsc{Singular:Letterplace} \cite{singular, letterplace} an algorithm using Gr\"obner bases for checking whether or not the complex unital algebra $\AAut(\Gamma)$ generated by elements $ u_{ij}, 1\leq i,j\leq n$ and the following relations is non-commutative: 
\[
         u_{ij} u_{ik} = \delta_{jk} u_{ij},  u_{ji} u_{ki} = \delta_{jk} u_{ji},  \sum_{k=1}^n u_{ik} = \sum_{k=1}^n u_{ki} = 1,
        u_{ik} u_{jl} = u_{jl} u_{ik} = 0,  \textnormal{if }\epsilon_{ij}\neq\epsilon_{kl}
\]
Note that the canonical map from $\AAut(\Gamma)$ onto  $C(\QBan(\Gamma))$ has a dense image, so commutativity of $\AAut(\Gamma)$ implies the absence of quantum symmetries. 
We also make use of a criterion by one of the authors~\cite{simonFolded} that yields  $\Aut(\Gamma) \neq \QBan(\Gamma)$ if $\Aut(\Gamma)$ contains a pair of disjoint automorphisms and 
of another one by Fulton~\cite{fulton} which states that $u_{ij} = 0$ if $\varepsilon^{(l)}_{ii} \neq \varepsilon^{(l)}_{jj}$ holds for some power $\epsilon^l$ of the adjacency matrix.   These criteria are checked using \textsc{Gap} \cite{gap}.

We produced the following data on the amount of connected undirected graphs (without multiple edges and loops) having quantum symmetry.

\begin{table}[H]
        \centering\label{table1}
        \begin{tabular}{|c||c c|c c|c c|c c|}
                \hline
                Order&\multicolumn{2}{c|}{4 vertices}&\multicolumn{2}{c|}{5 vertices} & \multicolumn{2}{c|}{6 vertices} & \multicolumn{2}{c|}{7 vertices}\\
                of $\Aut(\Gamma)$&total& qsym&total& qsym&total& qsym&total& qsym\\
                \hline
                720&&& & &1&1&&\\
                120&&&1&1&1&1&&\\
                72&&&&&1&1&&\\
                48&&&&&4&4&&\\
                36&&&&&1&1&&\\
                24&1&1&1&1&1&1&&\\
                16&&&&&3&3&&\\
                12&&&3&3&10&8&&\\
                10&&&1&0&1&0&&\\
                8&1&1&2&2&9&9&&\\
                6&1&0&1&0&7&0&&\\
                4&1&1&3&3&28&26&&\\
                2&2&0&9&0&37&0&317&0\\
                1&0&0&0&0&8&0&144&0\\
                total&6&3&21&10&112&55&853&?\\
                \hline
        \end{tabular}
        \caption{Number of connected, undirected graphs with quantum symmetries}
\end{table} 

 From the above data, we immediately infer the following result; here $\mathbb Z_2$ denotes the cyclic group on two generators.
\begin{thm*}[{Thm. \ref{mainThm}}]
        Let $\Gamma$ be an undirected graph on $n\leq 7$ vertices having no multiple edges and no loops (i.e. $(i,i)\notin E$ for all $i$). Then:
        \begin{displaymath}
                \Aut(\Gamma) = \mathbb{Z}_2 \qquad \Rightarrow \qquad \QBan\left( \Gamma \right) = \mathbb{Z}_2
        \end{displaymath}
        \begin{displaymath}
                \Aut(\Gamma) = \left\{  e\right\}\qquad  \Rightarrow \qquad \QBan\left( \Gamma \right) = \left\{ e\right\}
        \end{displaymath}
\end{thm*}

 In Section \ref{List} we list all connected, undirected graphs on up to six vertices (having no multiple edges and no loops) and their information on symmetry and quantum symmetry.

\section{Quantum symmetries of graphs}
We first sketch the mathematical background of this article.

\subsection{$C^*$-algebras and quantum spaces}

A $C^*$-algebra is a complex, associative algebra $A$ equipped with an involution $^*:A\to A$ (an antilinear map with $(xy)^*=y^*x^*$ and $(x^*)^*=x$) and a norm with respect to which it is complete as a topological space. Moreover, the norm is required to satisfy $\norm{xy}\leq\norm{x}\norm{y}$ and $\norm{x^*x}=\norm{x}^2$, the latter one being the most characteristic property of a $C^*$-algebra which distinguishes it from the more general $^*$-Banach algebras.

Examples of $C^*$-algebras are the (unital) algebra $C(X)$ of complex-valued continuous functions on a compact, topological Hausdorff space $X$; here the multiplication is the pointwise multiplication of functions, the involution is the pointwise complex conjugation and the norm is the supremum norm of functions. Another example is the (unital) algebra $M_n(\C)$ of complex-valued $n\times n$ matrices equipped with the matrix multiplication, the adjoint of matrices and the matrix norm. We observe that $C(X)$ is always a commutative $C^*$-algebra and the converse is  a fundamental theorem in the theory of $C^*$-algebras: Actually \emph{all} commutative (unital) $C^*$-algebras arise in exactly this form. Hence, we may identify commutative $C^*$-algebras with compact, topological spaces and in this sense, the theory of noncommutative $C^*$-algebras may be viewed as a kind of noncommutative topology or of ``quantum spaces''  --  a point of view which turns out to be very fruitful, see for instance \cite{blackadar, graciabondia}. It is therefore common in the theory of $C^*$-algebras to define a quantum object via its associated (possibly noncommutative) $C^*$-algebra -- a philosophy which might be slightly disturbing for an outsider, but which is extremely instructive for people actually working in the field. In any case, all statements on quantum spaces are made precise in terms of their underlying $C^*$-algebra, so there is never any issue of mathematical precision.

A very abstract but also very useful construction of $C^*$-algebras is the one of universal $C^*$-algebras, see for instance \cite{weberindian} for a short introduction. The main idea is to take a free algebra on some set of generators $x$ and their adjoints $x^*$, to divide out the ideal generated by some polynomial relations, and to endow it with the supremum of all $C^*$-seminorms (which in turn may be obtained from representations of the abstract $^*$-algebra on the algebra of bounded operators on Hilbert spaces).

\begin{ex}\label{ExProj}
 An example is the universal unital $C^*$-algebra generated by two (orthogonal) projections:
\[C^*\langle p,q\;|\; p=p^2=p^*, q=q^2=q^*\rangle\]
Note that this $C^*$-algebra is noncommutative, since we may easily find two matrices $p,q\in M_2(\C)$ satisfying the above relations, and in addition $pq\neq qp$.
\end{ex}

\subsection{Connection with $\mathbb C$-algebras}\label{CAlgebras}

As described in the previous section, $C^*$-algebras are complex algebras with an additional structure: there is an involution and also a topological hull coming from a norm. In this article, we will use the computer to produce data on certain algebraic approximations of $C^*$-algebras (in a relatively weak sense). More precisely, given a set of generators $\mathcal E$ and a set of algebraic relations $\mathcal R$ in the generators and their adjoints, we consider the set of relations $\mathcal R'$ arising from $\mathcal R$ by omitting all relations involving the involution. We then study the universal complex algebra generated by the generators $\mathcal E$ and the relations $\mathcal R'$. In case this complex algebra is commutative, we may infer commutativity of the corresponding universal $C^*$-algebra generated by $\mathcal E$ and $\mathcal R$. See Section \ref{TwoCriteria} for a precise statement adapted to our situation.

\subsection{Compact matrix quantum groups and quantum symmetries}
\label{cmqg}
Compact matrix quantum groups were defined by Woronowicz~\cite{woronowicz1987} in 1987 in order to provide an appropriate notion of (quantum) symmetry, for instance for the above sketched quantum spaces; see also \cite{NT, Tim} for more on this subject.
A compact matrix quantum group $G=(A,u)$ is given by a unital $C^*$-algebra $A$  and a matrix $u=(u_{ij}) \in M_n(A)$, $n\in \N$, such that 
\begin{enumerate}
        \item there is a $*$-homomorphism $\Delta: A \rightarrow A \otimes A$ with $\Delta(u_{ij}) = \sum_k u_{ik} \otimes u_{kj}$ for all $i, j$,
        \item $u$ and $\bar{u}=(u_{ij}^*)$ are invertible matrices,
        \item and the elements $u_{ij}$ $(1 \leq i, j \leq n)$ generate $A$ as a $C^*$-algebra.
\end{enumerate}
Any compact group $G\subset GL_n(\mathbb C)$ gives rise to a compact matrix quantum group by identifying $G$ with $(C(G), (u_{ij}))$ where $u_{ij}:G\to\mathbb C$ are the evaluation functions $u_{ij}(g)=g_{ij}$ of matrix entries.
If $G = \left( A, u \right)$ and $H = \left( B, v \right)$ are compact matrix quantum groups with $u~\in~M_n(A)$ and $v \in M_n(B)$, we say that $G$ is a 
compact matrix quantum subgroup of $H$, if there is a surjective $^{*}$-isomorphism from $B$ to $A$ mapping generators to generators.
We then write $G \subseteq H$. If we have $G \subseteq H$ and $H \subseteq G$, they are said to be equal as compact matrix quantum groups.

\begin{ex}
        \label{snplus}
        \label{sncomm} 
        An example for a compact matrix quantum group is the quantum symmetric group $S_n^+ = (C(S_n^+), u)$, which was  defined by Wang~\cite{wang} in 1998.
        It is the compact matrix quantum group given by
\[C(S_n^+) := C^*\langle u_{ij}, 1\leq i,j\leq n\;|\; u_{ij} = u_{ij}^* = u_{ij}^2, \sum_{k=1}^{n} u_{ik} = \sum_{k=1}^n u_{ki} = 1\rangle.\]
The quotient of $C\left(S_n^{+}\right)$ by the relation that all $u_{ij}$ commute is exactly $C\left(S_n\right)$.
    We have $S_n \subseteq S_n^{+}$ as compact matrix quantum groups. For $n\leq 3$, we have $S_n=S_n^+$, i.e. $C(S_n^+)$ is commutative. For $n \geq 4$ however, $C(S_n^+)$ is non-commutative as may be seen from the following surjective $*$-homomorphism onto the $C^*$-algebra from Example \ref{ExProj}
        \[\phi: C(S_{4}^+) \rightarrow C^*\langle p, q, 1 \;| \;p = p^* = p^2, q = q^* = q^2\rangle,\]
where we map the generators $u_{ij}\in C(S_{4}^+)$ to $p$, $q$, $1-p$, $1-q$ or $0$ according to the following matrix: 
        \[ \begin{bmatrix} 
                        u_{11}&u_{12}&u_{13}&u_{14}\\
                        u_{21}&u_{22}&u_{23}&u_{24}\\
                        u_{31}&u_{32}&u_{33}&u_{34}\\
                        u_{41}&u_{42}&u_{43}&u_{44}\\
                \end{bmatrix} \mapsto \begin{bmatrix} 
                        p&1-p&0&0\\
                        1-p&p&0&0\\
                        0&0&q&1-q\\
                        0&0&1-q&q\\ 
                \end{bmatrix}
        \]
\end{ex}

\subsection{Automorphism groups of finite graphs}
\label{aut}
We consider finite graphs $\Gamma=(V,E)$ with $V=\{1,\ldots,n\}$ having no multiple edges (i.e. we have $E\subset V\times V$). The adjacency matrix of such a graph is given by $\epsilon\in M_n(\{0,1\})$ with $\epsilon_{ij}=1$ if and only if $(i,j)\in E$. An automorphism of $\Gamma$ is a bijective map $\sigma: V \rightarrow V$ (i.e. an element in $S_n$) such that 
$(\sigma(i), \sigma(j)) \in E$ if and only if $(i, j) \in E$; equivalently we have $\sigma\epsilon\sigma^{-1}=\epsilon$, if we view $\sigma\in S_n$ as the permutation matrix with entries $\sigma_{ij}:=\delta_{\sigma(i)j}$.
The set of all automorphisms of $\Gamma$ forms the automorphism group $\Aut(\Gamma)$. 
It is a subgroup of $S_n$:
\begin{displaymath}
        \Aut(\Gamma) = \{\sigma \in S_n\; |\; \sigma \varepsilon = \varepsilon \sigma \} \subseteq S_n
\end{displaymath}

\subsection{Quantum automorphism groups of finite graphs}
\label{qaut}
Given a finite graph as above, its quantum automorphism group $\QBan(\Gamma)$ is defined \cite{banica} as the 
compact matrix quantum group given by
\[C(\QBan(\Gamma)) = C^*\langle u_{ij}, 1\leq i,j\leq n\;|\; u_{ij} = u_{ij}^*=u_{ij}^2, \sum_{k=1}^n u_{ik} = \sum_{k=1}^n u_{ki} = 1, u \varepsilon = \varepsilon u\rangle.\]
One can show \cite[Lemma 6.7]{speicherweber} that the relations on the generators are equivalent to:
\begin{align}
        &u_{ij} = u_{ij}^*&1 \leq i, j, k \leq n\\
        & u_{ij} u_{ik} = \delta_{jk} u_{ij}, u_{ji} u_{ki} = \delta_{jk} u_{ji} &1 \leq i, j, k \leq n\\
        &\sum_{k=1}^n u_{ik} = \sum_{k=1}^n u_{ki} = 1  &1\leq i \leq n\\
        &u_{ik} u_{jl} = u_{jl} u_{ik} = 0  &(i, j) \in E, (k, l) \notin E\\
        &u_{ik} u_{jl} = u_{jl} u_{ik} = 0  &(i, j) \notin E, (k, l) \in E
\end{align}

Note that we have 
        \[\Aut\left( \Gamma \right) \subseteq \QBan\left( \Gamma \right)\]
        in the sense of Section \ref{cmqg}: The quotient of the (not necessarily commutative) $C^*$-algebra $C(\QBan(\Gamma))$ by the relations $u_{ij}u_{kl}=u_{kl}u_{ij}$ yields $C(\Aut(\Gamma))$.

\begin{defn}
We say that a graph $\Gamma$ has quantum symmetries, if $\Aut\left( \Gamma \right) \subsetneq \QBan\left( \Gamma \right)$, or, equivalently, if $C(\QBan(\Gamma))$ is non-commutative.
\end{defn}

\begin{ex}
        \label{full}
        If $\Gamma$ is the full, undirected graph on $n$ vertices,  we have  $\QBan(\Gamma) = S_n^+$. Hence, for $n\geq 4$, this graph has quantum symmetries.
\end{ex} 
See Section  \ref{Examples} and\cite{simon, simonFolded} for more on quantum automorphism groups of graphs.

\subsection{Criteria for computing quantum symmetry}\label{TwoCriteria}
In this subsection, we collect a number of useful lemmata for computing quantum symmetry.

The first criterion is due to one of the authors of this article, see \cite{simonFolded} and it is applied to the automorphism group of the given graph. Let $\sigma, \tau\in S_n$ be two permutations. We say that they are disjoint, 
if $\sigma(i) \neq i$ implies $\tau(i) = i$ for all $i \in \{1,\ldots,n\}$, and likewise  $\tau(i) \neq i $ implies $\sigma(i) = i$ for all $i$.
\begin{lem}[Disjoint Automorphisms Criterion]
        \label{disjAutom} 
        Let $\Gamma = \left( V, E \right)$ be a finite graph without multiple edges, $V = \left\{ 1, \cdots, n \right\}$. If there are two non-trivial, disjoint automorphisms $\sigma, \tau \in \Aut\left( \Gamma \right)$,         then  $\Gamma$ has quantum symmetries.
\end{lem}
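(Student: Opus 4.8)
The plan is to construct an explicit surjective $*$-homomorphism from $C(\QBan(\Gamma))$ onto the noncommutative $C^*$-algebra $C^*\langle p,q\mid p=p^*=p^2,\, q=q^*=q^2\rangle$ of Example \ref{ExProj}, which will force $C(\QBan(\Gamma))$ to be noncommutative and hence exhibit quantum symmetries. The source of the two projections is the pair of disjoint automorphisms: since $\sigma$ and $\tau$ are disjoint, viewing them as permutation matrices one checks that $\sigma$ and $\tau$ commute, and the magic unitaries $u^\sigma=(u^\sigma_{ij})$ with $u^\sigma_{ij}=\delta_{\sigma(i)j}$ and $u^\tau$ likewise are two $\{0,1\}$-magic permutation matrices commuting with $\epsilon$. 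The idea is to combine them into a single magic unitary whose entries are genuine noncommuting projections.

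First I would set, for each $i\in V$, $v_{ij} := p\,\delta_{\sigma(i)j} + (1-p)\,\delta_{ij}$ on the support of $\sigma$, and $v_{ij} := q\,\delta_{\tau(i)j} + (1-q)\,\delta_{ij}$ on the support of $\tau$, and $v_{ij}=\delta_{ij}$ on the points fixed by both. Because $\sigma$ and $\tau$ are disjoint, their supports are disjoint, so these prescriptions do not overlap and $v=(v_{ij})$ is well defined. The key verification is that $v$ satisfies all the defining relations of $C(\QBan(\Gamma))$: the entries are self-adjoint projections; each row and each column sums to $1$ (on a $\sigma$-orbit the two nonzero entries are $p$ and $1-p$, similarly for $\tau$, and the fixed points contribute a single $1$); the product relations $v_{ij}v_{ik}=\delta_{jk}v_{ij}$ hold because within one orbit they reduce to $p^2=p$, $p(1-p)=0$, etc.; and, crucially, $v$ commutes with $\epsilon$. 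This last point is where the disjoint-automorphism hypothesis does real work: $v\epsilon-\epsilon v$ decomposes into a ``$p$-part'' and a ``$q$-part'' by disjointness of the supports, and the $p$-part vanishes precisely because $\sigma\epsilon=\epsilon\sigma$, the $q$-part because $\tau\epsilon=\epsilon\tau$; there are no mixed $pq$-terms to worry about, again by disjointness. By the universal property of $C(\QBan(\Gamma))$ we obtain a $*$-homomorphism $\Phi$ sending $u_{ij}\mapsto v_{ij}$, and it is surjective since $p = v_{i\sigma(i)}$ and $q = v_{j\tau(j)}$ for suitable $i,j$ lie in its image.

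Finally, since $p$ and $q$ do not commute in the target algebra, $v_{i\sigma(i)}$ and $v_{j\tau(j)}$ cannot have commuting preimages, so $C(\QBan(\Gamma))$ is noncommutative; equivalently the inclusion $\Aut(\Gamma)\subsetneq\QBan(\Gamma)$ is strict, which is the assertion. I expect the main obstacle to be the bookkeeping in checking $v\epsilon=\epsilon v$ cleanly: one must track how $\epsilon$ intertwines the $\sigma$-orbits and $\tau$-orbits with the fixed-point set and confirm that disjointness really does prevent any $p$-times-$q$ cross term from appearing; a careful orbit-by-orbit (or entrywise, splitting indices according to which of $\mathrm{supp}\,\sigma$, $\mathrm{supp}\,\tau$, or the common fixed set they lie in) argument handles it, but it is the step that requires the most care.
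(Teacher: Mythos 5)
Your proposal is correct and follows essentially the same route as the paper: the paper's proof maps $u_{ij}\mapsto \delta_{\sigma(i)j}\,p+\delta_{\tau(i)j}\,q+\delta_{ij}(1-p-q)$ into the universal $C^*$-algebra generated by two projections, which is exactly your piecewise-defined matrix $v$ written as a single formula, and concludes noncommutativity from surjectivity in the same way (the paper defers the relation-checking you spell out to the reference \cite{simonFolded}).
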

\begin{proof}
We find a surjection  onto the $C^*$-algebra from Example \ref{ExProj}, $\phi: C(\QBan(\Gamma))\to C^*\langle p, q, 1 \;| \;p = p^* = p^2, q = q^* = q^2\rangle$ mapping $u_{ij}\mapsto \delta_{\sigma(i)j} p+\delta_{\tau(i)j} q+ \delta_{ij} (1-p-q)$; see \cite{simonFolded}.
\end{proof}

The second criterion can be found in the PhD thesis of Fulton~\cite{fulton}. It relies on powers of the adjacency matrix.
\begin{lem}[Fulton Criterion]
        \label{epsilon}
        Let $\Gamma$ be a finite graph without multiple edges and let $\varepsilon \in M_n\left( \{0, 1\} \right)$ be its adjacency matrix.
        Denote by $\varepsilon_{ij}^{(l)}$ the $\left( i, j \right)$ entry of the $l$-th power $\varepsilon^l$ of $\epsilon$. If $\varepsilon_{ii}^{\left( l \right)} \neq \varepsilon_{jj}^{\left( l \right)}$
        for some $l \in \N$, then $u_{ij} = 0$ in $C(\QBan(\Gamma))$.
\end{lem}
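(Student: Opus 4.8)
The plan is to prove the Fulton Criterion by exploiting the coassociativity-type relations among the generators together with the intertwiner relation $u\varepsilon = \varepsilon u$. First I would observe that the defining relations (2) say that for each fixed $i$ the family $(u_{ij})_j$ consists of pairwise orthogonal projections summing to $1$ (and similarly down columns); in particular $u_{ij}u_{ik}=0$ for $j\neq k$ and $u_{ij}^2=u_{ij}$. The key structural input is that $u$ is a ``magic unitary'' that intertwines $\varepsilon$, so by induction $u$ intertwines every power: $u\varepsilon^l = \varepsilon^l u$ for all $l\in\N$. Written entrywise this reads $\sum_k u_{ik}\varepsilon^{(l)}_{kj} = \sum_k \varepsilon^{(l)}_{ik} u_{kj}$ for all $i,j$.

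Next I would multiply the intertwiner identity on the left by $u_{ij}$ and use orthogonality to isolate a single term. Concretely, consider the $(i,j)$ entry of $u\varepsilon^l = \varepsilon^l u$ and left-multiply by $u_{ij}$: on the left-hand side $u_{ij}\sum_k u_{ik}\varepsilon^{(l)}_{kj} = u_{ij}\varepsilon^{(l)}_{jj}$ since $u_{ij}u_{ik}=0$ unless $k=j$ and $u_{ij}u_{ij}=u_{ij}$; on the right-hand side $u_{ij}\sum_k \varepsilon^{(l)}_{ik}u_{kj}$. To handle the right-hand side I would instead also right-multiply a suitable form by $u_{ij}$, or better, take the $(i,j)$ entry, left-multiply by $u_{ij}$ and right-multiply by $u_{ij}$ (using that $u_{ij}u_{kj}=0$ for $k\neq i$ on the right), which collapses $\sum_k \varepsilon^{(l)}_{ik}u_{kj}u_{ij}$ to $\varepsilon^{(l)}_{ii}u_{ij}$. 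Comparing the two reductions gives $\varepsilon^{(l)}_{jj}\,u_{ij} = \varepsilon^{(l)}_{ii}\,u_{ij}$, i.e. $(\varepsilon^{(l)}_{ii}-\varepsilon^{(l)}_{jj})u_{ij}=0$. Since the coefficient is a nonzero integer (scalar in $\C$) whenever $\varepsilon^{(l)}_{ii}\neq\varepsilon^{(l)}_{jj}$, we may divide it out and conclude $u_{ij}=0$ in $C(\QBan(\Gamma))$.

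The main obstacle, and the point requiring care, is making the double-sided multiplication argument rigorous: one must verify that sandwiching the entrywise identity $\sum_k u_{ik}\varepsilon^{(l)}_{kj} = \sum_k \varepsilon^{(l)}_{ik} u_{kj}$ between $u_{ij}$ on the left and $u_{ij}$ on the right really does annihilate all cross terms on \emph{both} sides simultaneously, using $u_{ij}u_{ik}=\delta_{jk}u_{ij}$ for the left factor and $u_{kj}u_{ij}=\delta_{ki}u_{ij}$ for the right factor (relation (2) in its two forms, for rows and for columns). One should also note that $\varepsilon^{(l)}_{kj}$ and $\varepsilon^{(l)}_{ik}$ are scalars, so they commute past everything, and that the whole argument takes place already in the universal $^*$-algebra, hence a fortiori in $C(\QBan(\Gamma))$; in particular the same computation shows $u_{ij}=0$ in $\AAut(\Gamma)$ as well, which is what the computer implementation relies on. No analytic or norm-theoretic input is needed — the statement is purely algebraic.
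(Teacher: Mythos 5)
Your argument is correct and follows essentially the same route as the paper: intertwine the power, $u\varepsilon^l=\varepsilon^l u$, write the $(i,j)$ entry, multiply by $u_{ij}$, and let the orthogonality relations (2.2) collapse the sums to get $(\varepsilon^{(l)}_{ii}-\varepsilon^{(l)}_{jj})u_{ij}=0$. The only (harmless) difference is that your two-sided sandwich is not needed: a single multiplication by $u_{ij}$ already collapses both sides at once, since relation (2.2) provides row orthogonality for $\sum_k u_{ik}\varepsilon^{(l)}_{kj}$ and column orthogonality for $\sum_k \varepsilon^{(l)}_{ik}u_{kj}$, which is exactly what the paper does.
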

\begin{proof}
        Assume $\varepsilon_{ii}^{(l)} \neq \varepsilon_{jj}^{(l)}$ for some $l \in \N$. It holds that $u \varepsilon^{l} = \varepsilon^{l} u$ or equivalently
        \[ \sum_{k=1}^n \varepsilon_{ik}^{(l)} u_{kj} = \sum_{k=1}^{n}u_{ik} \varepsilon_{kj}^{(l)}.\]
        Multiplying this equation by $u_{ij}$ yields thanks to Relation (2.2):
        \[ \varepsilon_{ii}^{(l)} u_{ij} = \varepsilon_{jj}^{(l)} u_{ij}\]
        Since $\varepsilon_{ii}^{(l)} \neq \varepsilon_{jj}^{(l)}$, we get $u_{ij} = 0$.
\end{proof}

Observe that the $l$-th power of $\epsilon$ counts the number of paths of length $l$ from one vertex to another, i.e. if $\epsilon_{ij}^{(l)}=k$, then there are exactly $k$ different paths $(v_0,\ldots,v_l)$ with $v_s\in V$, $v_0=i$, $v_l=j$ and $(v_s,v_{s+1})\in E$; we allow $v_s=v_t$ here.

\begin{rem}
Note that the Fulton criterion does not yield any information in the case of vertex-transitive graphs (or more generally, for walk-regular graphs) as we have $\varepsilon_{ii}^{\left( l \right)}=\varepsilon_{jj}^{\left( l \right)}$ for all $i,j,l$ in that case.
\end{rem}

\begin{rem}\label{epsilonForAut}
Note that $u_{ij} = 0$ in $C(\QBan(\Gamma))$ implies $\sigma_{ij}=0$ for all $\sigma\in\Aut(\Gamma)$ in the notation of Section \ref{aut}, since any $\sigma\in\Aut(\Gamma)$ gives rise to a $^*$-homomorphism $\phi:C(\QBan(\Gamma))\to\C$ mapping $u_{ij}\mapsto \sigma_{ij}$.
\end{rem}

Finally, the third criterion is based on the connection between $C^*$-algebras and $\mathbb C$-algebras, see Section \ref{CAlgebras}.
 It may be viewed as a soft algebraisation of quantum automorphism groups of graphs.

\begin{defn}\label{DefAlgQAut}
Let $\Gamma$ be a finite graph as in Section \ref{aut}. We define the following universal unital complex algebra:
\[\AAut(\Gamma):=\mathbb C\langle 
 u_{ij}, 1\leq i,j\leq n\;|\; \textnormal{Relations (2.2)--(2.5)}\rangle\]
\end{defn}

\begin{lem}[Algebraic Criterion]\label{LemLink}
Let $\Gamma$ be a finite graph on $n$ vertices without multiple edges. If $\AAut(\Gamma)$ is commutative, then so is $C(\QBan(\Gamma))$, i.e. $\Gamma$ has no quantum symmetries.
\end{lem}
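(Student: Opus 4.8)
The plan is to exploit the universal property of $\AAut(\Gamma)$ as a complex algebra, exactly as sketched in Section~\ref{CAlgebras}. Recall that $C(\QBan(\Gamma))$ is the universal unital $C^*$-algebra on generators $u_{ij}$ subject to Relations (2.1)--(2.5), while $\AAut(\Gamma)$ is the universal unital complex algebra on the same generators subject only to Relations (2.2)--(2.5) (we drop (2.1), the self-adjointness relations, since these involve the involution). The key observation is that the self-adjointness relations are part of the \emph{$C^*$-structure} and not of the bare algebraic relations, so there is a canonical comparison map in one direction.

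Concretely, I would first check that the generators $u_{ij}$ of $C(\QBan(\Gamma))$ satisfy Relations (2.2)--(2.5); but these are literally among the defining relations of $C(\QBan(\Gamma))$, so there is nothing to verify. By the universal property of $\AAut(\Gamma)$ in the category of unital complex algebras, there is therefore a unital algebra homomorphism
\[
\pi\colon \AAut(\Gamma)\longrightarrow C(\QBan(\Gamma)),\qquad \pi(u_{ij})=u_{ij}.
\]
Since the $u_{ij}$ generate $C(\QBan(\Gamma))$ as a $C^*$-algebra, they generate a dense $*$-subalgebra; and in fact, because each $u_{ij}$ is self-adjoint in $C(\QBan(\Gamma))$, the $*$-algebra they generate coincides with the plain algebra they generate. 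Hence the image of $\pi$ is a dense subalgebra of $C(\QBan(\Gamma))$.

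Now suppose $\AAut(\Gamma)$ is commutative. Then its homomorphic image $\pi(\AAut(\Gamma))$ is a commutative subalgebra of $C(\QBan(\Gamma))$, and it is dense. Since multiplication is jointly continuous in a $C^*$-algebra (indeed $\norm{xy}\leq\norm{x}\norm{y}$), the commutator map $(x,y)\mapsto xy-yx$ is continuous, so it vanishes on all of $C(\QBan(\Gamma))$ by density. Therefore $C(\QBan(\Gamma))$ is commutative, which by the Definition following Relation (2.5) means exactly that $\Gamma$ has no quantum symmetries. The one point that deserves care --- and which I expect to be the only genuine subtlety --- is the claim that the algebra generated by the self-adjoint elements $u_{ij}$ already is a $*$-subalgebra and hence dense; this is immediate once one notes that a product $u_{i_1j_1}\cdots u_{i_kj_k}$ has adjoint $u_{i_kj_k}\cdots u_{i_1j_1}$, again a word in the generators, so closure under $*$ is automatic. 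Everything else is a direct application of the universal property together with continuity of multiplication.
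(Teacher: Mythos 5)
Your proposal is correct and follows exactly the paper's argument: the paper's proof simply notes the existence of an algebra homomorphism $\AAut(\Gamma)\to C(\QBan(\Gamma))$ with dense image, and you have spelled out the details (universal property, self-adjointness of the $u_{ij}$ making the generated algebra a dense $*$-subalgebra, and continuity of multiplication transferring commutativity to the closure). No gaps; this is the same route, just made explicit.
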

\begin{proof}
We have an algebra homomorphism from $\AAut(\Gamma)$ to $C(\QBan(\Gamma))$ which has dense image. Hence, the statement follows.
\end{proof}

\begin{rem}\label{RemAlgCriterion}
The Algebraic Criterion basically states that if we may derive the relations $u_{ij}u_{kl}=u_{kl}u_{ij}$ from Relations (2.2)--(2.5) by purely algebraic means (i.e. in $\AAut(\Gamma)$), then we may do so also in $C(\QBan(\Gamma))$. The converse does not hold in general: The commutativity relations might follow from Relations (2.1)--(2.5) by using some purely $C^*$-algebraic techniques. For instance, we have $x^*x=0$ if and only if $x=0$ if and only if $x^*=0$ in any $C^*$-algebra as a result from the norm conditions. As a concrete example (see \cite{SimonNeu}), if we have  $u_{ij}u_{kl}u_{ij}=u_{ij}u_{kl}$ in $C(\QBan(\Gamma))$, then
\[u_{ij}u_{kl}= u_{ij}u_{kl}u_{ij}=(u_{ij}u_{kl}u_{ij})^*=(u_{ij}u_{kl})^*=u_{kl}u_{ij}.\]
\end{rem}

\section{Examples}\label{Examples}
For the convenience of the reader, we list a couple of examples. In the remainder of the article, we focus on finite, undirected (i.e. we have $(i,j)\in E$ implies $(j,i)\in E$), connected graphs, having no multiple edges (i.e. we have $E\subset V\times V$) and no loops (i.e. we have $(i,i)\notin E$).

\subsection{Circle graphs}\label{CircleGraphs}
        Let $\Gamma$ be the circle on four vertices:
        \begin{center}
        \begin{tikzpicture}[-latex ,auto ,node distance = 2 cm and 2.5 cm, on grid,semithick, state/.style={circle, top color = white, bottom color = white, draw,black, text=black, minimum width = 0.15cm}]
        \node[state] (A) {$1$};
        \node[state] (B) [right=of A] {$2$};
        \node[state] (C) [below=of B] {$3$};
        \node[state] (D) [left=of C] {$4$}; 
        \path (A) edge (B);
        \path (B) edge (A);
        \path (C) edge (B); 
        \path (B) edge (C); 
        \path (D) edge (C); 
        \path (C) edge (D); 
        \path (A) edge (D); 
        \path (D) edge (A);

        \end{tikzpicture}
        \end{center} 
        By the Disjoint Automorphism Criterion (Lemma \ref{disjAutom}) it has quantum symmetries, since $\sigma := (1, 3) \in \Aut(\Gamma)$ and $\tau := (2, 4) \in \Aut(\Gamma)$
      are disjoint.
        In fact \cite{bichon}, $\QBan(\Gamma) = H_2^+$, where $H_2^+$ is the hyperoctahedral quantum group.
        In contrast to the circle on four vertices, the circle on $n\neq 4$ vertices does \emph{not} have quantum symmetries \cite{banica}.

\subsection{Santa's house}

        \label{nikolaus}
        Let $\Gamma$ be the following graph:
         \begin{center}
\begin{tikzpicture}[-latex ,auto ,node distance = 1.5 cm and 1.5 cm, on grid,semithick, state/.style={circle, top color = white, bottom color = white, draw,black, text=black, minimum width = 0.15cm}]
\node[state] (A) {$5$};
\node[state] (B) [below left=of A] {$1$};
\node[state] (C) [below right=of A] {$4$};
\node[state] (D) [below=of B] {$2$};
\node[state] (E) [below=of C] {$3$};
\path (A) edge (B);
\path (A) edge (C);
\path (B) edge (D);
\path (B) edge (C);
\path (C) edge (B);
\path (E) edge (D);
\path (D) edge (E);

\path (B) edge (E);
\path (C) edge (D);
\path (C) edge (E);
\path (B) edge (A);
\path (D) edge (B);
\path (D) edge (C);
\path (C) edge (A);
\path (E) edge (B);
\path (E) edge (C);
\end{tikzpicture}
\end{center} 
        We have that $(2, 3) \in \Aut(\Gamma)$ and $(1, 4) \in \Aut(\Gamma)$. The Disjoint Automorphism Criterion thus yields that $\Gamma$ does have quantum symmetries.
        Now we want to compute the quantum automorphism group of $\Gamma$. We have that 
        \[\varepsilon = \begin{bmatrix}
                        0&1&1&1&1\\
                        1&0&1&1&0\\
                        1&1&0&1&0\\
                        1&1&1&0&1\\
                        1&0&0&1&0\\
                \end{bmatrix} \text{ and } 
                \varepsilon^2 = \begin{bmatrix}
                        4&2&2&3&1\\
                        2&3&2&2&2\\
                        2&2&3&2&2\\
                        3&2&2&4&1\\
                        1&2&2&1&2\\
                \end{bmatrix}
        \]
        Thus, by the Fulton Criterion (Lemma \ref{epsilon}) and Relation (2.3) we have that the generating matrix $u$ of $C(\QBan(\Gamma))$ looks as follows:
        \[u = \begin{bmatrix}
                        u_{11}&0&0&1-u_{11}&0\\
                        0&u_{22}&1-u_{22}&0&0\\
                        0&1-u_{22}&u_{22}&0&0\\
                        1-u_{11}&0&0&u_{11}&0\\
                        0&0&0&0&1\\
                \end{bmatrix}
        \]
Since  $\Gamma$  has quantum symmetries, we conclude that $\QBan(\Gamma) = \widehat{\Z_2 * \Z_2}$ (which means $C(\QBan(\Gamma)) = C^*(\Z_2 * \Z_2)$ on the level of $C^*$-algebras), compare also \cite[Thm 3.8]{simon}.

\subsection{Santa's house with broken roof}
        If we now look at the above graph after taking away the (undirected) edge $(4,5)$, we obtain:
        \begin{center}
        \begin{tikzpicture}[-latex ,auto ,node distance = 1.5 cm and 1.5 cm, on grid,semithick, state/.style={circle, top color = white, bottom color = white, draw,black, text=black, minimum width = 0.15cm}]
        \node[state] (A) {$5$};
        \node[state] (B) [below left=of A] {$1$};
        \node[state] (C) [below right=of A] {$4$};
        \node[state] (D) [below=of B] {$2$};
        \node[state] (E) [below=of C] {$3$};
        \path (A) edge (B);
        \path (B) edge (D);
        \path (B) edge (C);
        \path (C) edge (B);
        \path (E) edge (D);
        \path (D) edge (E);
        \path (B) edge (E);
        \path (C) edge (D);
        \path (C) edge (E);
        \path (B) edge (A);
        \path (D) edge (B);
        \path (D) edge (C);
        \path (E) edge (B);
        \path (E) edge (C);
        \end{tikzpicture}
        \end{center}
        and thus 
        \[\varepsilon = \begin{bmatrix}
                        0&1&1&1&1\\
                        1&0&1&1&0\\
                        1&1&0&1&0\\
                        1&1&1&0&0\\
                        1&0&0&0&0\\
                \end{bmatrix} \text{ and } 
                \varepsilon^2 = \begin{bmatrix}
                        4&2&2&2&0\\
                        2&3&2&2&1\\
                        2&2&3&2&1\\
                        2&2&2&3&1\\
                        0&1&1&1&1\\
                \end{bmatrix}
        \]
        By the Fulton Criterion we obtain
        \[u = \begin{bmatrix} 
                        1&0&0&0&0\\
                        0&u_{22}&u_{23}&u_{24}&0\\
                        0&u_{32}&u_{33}&u_{34}&0\\
                        0&u_{42}&u_{43}&u_{44}&0\\
                        0&0&0&0&1\\
                \end{bmatrix}
        \]
  and hence $\QBan(\Gamma) = \Aut(\Gamma) = S_{3}$ (recall that $S_3^+=S_3$), which means that the graph does not have quantum symmetries.

\subsection{A graph with trivial quantum automorphism group}
For the graph
\begin{center}
\begin{tikzpicture}[-latex ,auto ,node distance = 1 cm and 1.5 cm, on grid,semithick, state/.style={circle, top color = white, bottom color = white, draw,black, text=black, minimum width = 0.15cm}]
\node[state] (A) {$1$};
\node[state] (B) [right=of A] {$2$};
\node[state] (C) [above right=of B] {$3$};
\node[state] (D) [below right=of C] {$4$};
\node[state] (E) [right=of D] {$5$};
\node[state] (F) [right=of E] {$6$}; 
\path (A) edge (B);
\path (B) edge (A);
\path (C) edge (B); 
\path (B) edge (C);
\path (B) edge (D); 
\path (D) edge (B);
\path (D) edge (C); 
\path (C) edge (D); 
\path (E) edge (D);
\path (F) edge (E);

\path (D) edge (E);
\path (E) edge (F);
\end{tikzpicture}
\end{center} 
we have
\begin{displaymath}
        \varepsilon = \begin{bmatrix}
                        0&1&0&0&0&0\\
                        1&0&1&1&0&0\\
                        0&1&0&1&0&0\\
                        0&1&1&0&1&0\\
                        0&0&0&1&0&1\\
                        0&0&0&0&1&0\\
                \end{bmatrix}
        \text{ and }
        \varepsilon^4 = \begin{bmatrix}
                        3&2&4&5&1&1\\
                        2&12&7&7&6&1\\
                        4&7&8&7&5&1\\
                        5&7&7&13&2&4\\
                        1&6&5&2&6&0\\
                        1&1&1&4&0&2\\
                \end{bmatrix} 
\end{displaymath}
By the Fulton Criterion we deduce $u_{ij} = \delta_{ij}$ and thus $C(\QBan(\Gamma))$ is one-dimensional; hence $\QBan(\Gamma) = \Aut(\Gamma) = \{e\}$.

\section{A computational approach to quantum symmetries of graphs}

We now discuss our approach for checking the existence of quantum symmetries.

\subsection{Preprocessing in \textsc{GAP} and \textsc{PYTHON}}
We used  \textsc{Python} scripts for producing the adjacency matrices of graphs on $n\leq 7$ vertices as input files and we restricted to equivalence classes of graphs, with the help of \textsc{Gap} \cite{gap}. We  used \textsc{Gap} again  to calculate the automorphism group of every graph we considered.

\subsection{Implementation of the Algebraic Criterion in \textsc{SINGULAR}} \quad


\begin{algorithm}[H]
        \caption{\textsc{QSym}}
        \LinesNumbered
        \SetKwData{Left}{left}\SetKwData{This}{this}\SetKwData{Up}{up}
        \SetKwFunction{Union}{Union}\SetKwFunction{FindCompress}{FindCompress}
        \SetKwInOut{Input}{input}\SetKwInOut{Output}{output}
        \Input{$\epsilon$, the adjacency matrix of a graph $\Gamma$ on $n$ vertices}
        \Output{the value 1, if $\AAut(\Gamma)$ is commutative\\ the value 0  otherwise}
        let $R$ be a $\mathbb C$-algebra with generators $u_{ij}, i, j = 1, \cdots, n$\;
        let $I$ be an empty ideal in $R$\;
        let $J$ be an empty ideal in $R$\;
        \For{$i, j, k = 1 .. n$}{
            add the relation $u_{ik} u_{jk} = \delta_{ij} u_{ik}$ to the ideal $I$\;
            add the relation $u_{ki} u_{kj} = \delta_{ij} u_{ki}$ to the ideal $I$\;
         }
        \For{$i = 1 .. n$}{
                add the relation $\sum_{j=1}^{n} u_{ij} = 1$ to the ideal $I$\;
                add the relation $\sum_{j=1}^{n} u_{ji} = 1$ to the ideal $I$\;
        }
        \For{$i, j, k, l = 1 .. n$}{
                \eIf{$\epsilon[i, j] = 1$}{
                        \If{$\epsilon[k, l] \neq 1$}{
                            add the relation $u_{ik} u_{jl} = 0$ to the ideal $I$\;}
                        }{
                            \If{$\epsilon[k, l] = 1$}{
                                add the relation $u_{ik} u_{jl} = 0$ to the ideal $I$\;
                        }
                    }
        } 
               \For{$k = 1 .. n^2$}{
            \For{$i, j = 1 .. n$}{ 
                    \If{$\epsilon^{(k)}[i, i] \neq \epsilon^{(k)}[j, j]$}{add the relation $u_{ij} = 0$ to the ideal $I$\;} 
            } 
        }
        \For{$i, j, k, l = 1 .. n$}{
                add the relation $u_{ij} u_{kl} = u_{kl} u_{ij}$ to the ideal $J$ \; 
        }
        compute a two-sided  Gr\"obner basis of $I$\;
        \For{$t$ in $J$}{
            \If{$t$ is not in $I$}{return 0 and exit\;}
        }
        return 1\; 
\end{algorithm}

 We  implemented the algorithm \textsc{QSym} in \textsc{Singular} \cite{singular} using the subsystem \textsc{Letterplace} \cite{letterplace}. The algorithm is  a straightforward implementation of the algebra $\AAut(\Gamma)$ from Definition \ref{DefAlgQAut}, i.e. of the complex algebra given by  Relations (2.2)--(2.5) from Section \ref{qaut}; the Fulton Criterion (Lemma \ref{epsilon}) is also inserted.
 
Indeed, we first produce the free $\mathbb C$-algebra $R$ with generators $u_{ij}$, for $1\leq i,j\leq n$. We then add all relations of $\AAut(\Gamma)$ to the ideal $I$, i.e. Relations (2.2) in lines 4--6, Relations (2.3) in lines 7--9, Relations (2.4) in lines 10--13 and Relations (2.5) in lines 14--16. 
Note that for Relations (2.4) and (2.5) we do not need to add the relations $u_{jl}u_{ik}=0$ since every possible combination of the indices $i,j,k$ and $l$ will pass through the loop of lines 10--16.
In lines 17--20, we add relations $u_{ij}=0$ whenever the Fulton Criterion (Lemma \ref{epsilon}) is satisfied for powers of the adjacency matrix up to the power $n^2$ (which is an ad hoc choice for a  bound). In lines 21--22 we construct the ideal $J$ of all commutativity relations. We then compute the Gr\"obner basis with respect to the degree reverse lexicographical monomial order of the ideal $I$ in $R$ and check whether there is any element from $J$ which is not in $I$, see lines 23--25. If so, the algorithm terminates and gives the value 0 as an output; otherwise, the output is 1.

\subsection{Limits of the algorithm \textsc{QSYM}}\label{limits}

Regarding the limits of the algorithm \textsc{QSym}, let us note that since it is based on Gr\"obner basis computations, there are natural limits for the number of generators, also depending on the complexity of the adjacency matrix $\epsilon$. In other words: The situation may occur, where we simply cannot compute the Gr\"obner basis of the ideal, i.e. we are stuck before coming to lines 23--25 of our algorithm. This did not happen for the graphs we considered, but computations for a higher number of vertices (such as $n=10$) quickly reach this point.

One  might improve the algorithm by playing around with different monomial orders for computing the Gr\"obner basis, by first applying the Fulton Criterion before running the algorithm (and thus by using less variables in the first place), by parallelizing the computation of the Gr\"obner basis, or by improving the Algebraic Criterion, i.e by adding further relations coming from the $C^*$-algebraic side, see Remark \ref{RemAlgCriterion}. The main factor is, of course, the speed and memory consumption of the Gr\"obner base computation for noncommuting elements, as implemented by \textsc{Letterplace} in \textsc{Singular}.

\subsection{Implementation of the Disjoint Automorphism Criterion in \textsc{GAP}}
We  used \textsc{Gap}   to  check the Disjoint Automorphism Criterion (Lemma \ref{disjAutom}),
i.\,e. whether the automorphism group contains two disjoint permutations.

\subsection{Conclusion: a combination of two tools}

Let us summarize the statements from the Disjoint Automorphism Criterion (Lemma \ref{disjAutom}) and the Algebraic Criterion (Lemma \ref{LemLink}).

\begin{lem}[{Lemma \ref{disjAutom} and Lemma \ref{LemLink}}]
Let $\Gamma$ be a finite graph having no multiple edges.
\begin{itemize}
\item[(a)] If the Disjoint Automorphism Criterion applies, then $\Gamma$ has quantum symmetries.
\item[(b)] If \textsc{QSym} has output 1, then $\Gamma$ has no quantum symmetries.
\end{itemize}
\end{lem}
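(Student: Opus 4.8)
The plan is to observe that this lemma is simply the conjunction of the Disjoint Automorphism Criterion (Lemma~\ref{disjAutom}) and the Algebraic Criterion (Lemma~\ref{LemLink}), so that the proof reduces to unwinding the two hypotheses. For part~(a), I would recall that ``the Disjoint Automorphism Criterion applies'' means, by definition, that $\Aut(\Gamma)$ contains two non-trivial, disjoint automorphisms $\sigma,\tau$; this is exactly the hypothesis of Lemma~\ref{disjAutom}, whose conclusion is that $\Gamma$ has quantum symmetries. So (a) requires nothing beyond spelling out the definition of ``applies''.

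For part~(b), I would first translate ``\textsc{QSym} has output~$1$'' into algebra. Reading off the pseudocode, the two-sided ideal $I$ of the free $\mathbb{C}$-algebra $R=\mathbb{C}\langle u_{ij}\rangle$ is generated by Relations (2.2)--(2.5) together with the Fulton relations $u_{ij}=0$ added in lines~17--20, while $J$ is generated by all commutators $u_{ij}u_{kl}-u_{kl}u_{ij}$. The algorithm returns~$1$ precisely when the membership test in lines~23--25 succeeds for every generator of $J$, which, by correctness of the two-sided Gr\"obner basis machinery of \textsc{Letterplace}, is equivalent to $J\subseteq I$, i.e.\ to $R/I$ being commutative. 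I would then note that every relation cutting out $I$ already holds in $C(\QBan(\Gamma))$: Relations (2.2)--(2.5) by \cite[Lemma~6.7]{speicherweber}, and the Fulton relations $u_{ij}=0$ by Lemma~\ref{epsilon}. Hence the canonical map $R\to C(\QBan(\Gamma))$ factors through a homomorphism $R/I\to C(\QBan(\Gamma))$ with dense image, so commutativity of $R/I$ forces $C(\QBan(\Gamma))$ to be commutative by continuity of multiplication, which is the definition of $\Gamma$ having no quantum symmetries. Equivalently, one checks that the Fulton relations are already consequences of (2.2)--(2.5) in $R$ --- using that $u\varepsilon=\varepsilon u$ follows from (2.2)--(2.5) alone, after which the argument of Lemma~\ref{epsilon} applies --- so that $R/I\cong\AAut(\Gamma)$ and Lemma~\ref{LemLink} applies verbatim.

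The one step that is not pure bookkeeping is the equivalence ``output~$1$ $\iff$ $J\subseteq I$'', and this is where I expect the only real (albeit ``infrastructural'') difficulty to lie: it rests on the termination and correctness of the noncommutative Gr\"obner basis computation and on the fact that, once a two-sided Gr\"obner basis of $I$ with respect to the degree reverse lexicographical order is available, normal-form reduction is a decision procedure for membership in $I$. For this I would simply cite \cite{letterplace}. Everything else --- translating the hypothesis of (a), verifying that the defining relations of $I$ survive in $C(\QBan(\Gamma))$, and the soft fact that a dense subalgebra of a noncommutative $C^*$-algebra cannot be commutative --- is routine and mirrors the proof of Lemma~\ref{LemLink}.
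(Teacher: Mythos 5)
Your proposal is correct and follows essentially the same route as the paper, which offers no separate argument for this lemma but simply regards it as the conjunction of Lemma \ref{disjAutom} (giving part (a)) and Lemma \ref{LemLink} applied to the commutativity decided by \textsc{QSym} via the Gr\"obner basis computation (giving part (b)). Your extra care about the Fulton relations sitting inside the ideal $I$ --- resolved either by noting they hold in $C(\QBan(\Gamma))$ so the dense-image homomorphism still factors through $R/I$, or by observing they are algebraic consequences of Relations (2.2)--(2.5) --- only makes explicit what the paper leaves implicit in its description of the algorithm.
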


Note that the situation might occur, when there are no disjoint automorphisms and \textsc{QSym} has output 0. In that case, no conclusion is possible for the existence of quantum symmetries.

\section{Application of the algorithm to graphs on up to seven vertices}

We now sketch how we applied our algorithm in a concrete setup.

\subsection{The data}\label{Data}

As before, we consider undirected graphs $\Gamma$ on $n$ vertices having no multiple edges and no loops. For $n\leq 3$, we have $S_n=S_n^+$ and hence $\Aut(\Gamma)=\QBan(\Gamma)$, i.e. we never have quantum symmetries. The case of graphs on $n=4$ vertices has been treated in \cite{simon}. As for $n=5$ and $n=6$, we checked for every connected graph both the Disjoint Automorphism Criterion as well as the Algebraic Criterion, the latter one via the algorithm \textsc{QSym}. Fortunately, the criteria matched perfectly well, i.e. the Disjoint Automorphism Criterion applied if and only if the output of \textsc{QSym} was 0.
Hence, combining these two tools, we were able to settle the question of the existence of quantum symmetries for $n=5$ and $n=6$.
For $n=7$, we also run our algorithm on graphs whose automorphism groups are of order one or two, \textsc{QSym} being constantly 1.

In the following table (from the introduction) we list all possible orders of the automorphism groups, the number of graphs whose automorphism group has this order, and the number of graphs amongst them having quantum symmetries.

\begin{table}[H]
        \centering\label{table1}
        \begin{tabular}{|c||c c|c c|c c|c c|}
                \hline
                Order&\multicolumn{2}{c|}{4 vertices}&\multicolumn{2}{c|}{5 vertices} & \multicolumn{2}{c|}{6 vertices} & \multicolumn{2}{c|}{7 vertices}\\
                of $\Aut(\Gamma)$&total& qsym&total& qsym&total& qsym&total& qsym\\
                \hline
                720&&& & &1&1&&\\
                120&&&1&1&1&1&&\\
                72&&&&&1&1&&\\
                48&&&&&4&4&&\\
                36&&&&&1&1&&\\
                24&1&1&1&1&1&1&&\\
                16&&&&&3&3&&\\
                12&&&3&3&10&8&&\\
                10&&&1&0&1&0&&\\
                8&1&1&2&2&9&9&&\\
                6&1&0&1&0&7&0&&\\
                4&1&1&3&3&28&26&&\\
                2&2&0&9&0&37&0&317&0\\
                1&0&0&0&0&8&0&144&0\\
                total&6&3&21&10&112&55&853&?\\
                \hline
        \end{tabular}
        \caption{Number of connected, undirected graphs with quantum symmetries}
\end{table}

\subsection{Some observations}\label{SectObservations}

Studying the table, we notice a couple of things:
\begin{enumerate}
\item The ratio between graphs having quantum symmetries and those having no quantum symmetries is about $50:50$. As it is known that almost all graphs have no symmetries and no quantum symmetries \cite{LMR}, this effect is just a distortion for small $n$. However, it could be interesting to observe until which $n$ this phenomenon occurs.
\item No graph whose automorphism group has order one or two has quantum symmetries. See also Theorem \ref{mainThm}.
 Moreover, we observe that also the order 6 might be an obstruction for quantum symmetries.
\item Conversely, the orders 4, 8 and 12 seem to be quite friendly towards the existence of quantum symmetries.
\end{enumerate}

\subsection{From connected graphs to all graphs}

Before we come to some conclusion derived from the above data on connected graphs, let us collect some facts on disconnected graphs. We first recall the disjoint union of graphs.

Let $\Gamma=(V,E)$ be a graph. By $\Gamma^{\sqcup m}$ we denote the disjoint union of $m$ copies of $\Gamma$, i.e. its vertex set can be written as $V^{\sqcup m}=V\times\{1,\ldots,m\}$ and vertices $(i,x)\in V^{\sqcup m}$ and $(j,y)\in V^{\sqcup m}$ are connected by an edge if and only if $x=y$ and $(i,j)\in E$. Let us remark that if $\Gamma$ consists in a single vertex, then $\Aut(\Gamma^{\sqcup m})=S_m$, whereas $\QBan(\Gamma^{\sqcup m})=S_m^+$. If $\Gamma$ is the graph on two connected vertices, then
\[\Aut(\Gamma^{\sqcup m})=H_m:=\Z_2\wr S_m=(\Z_2\times \ldots \times \Z_2)\rtimes S_m,\]
where $H_m$ denotes the hyperoctahedral group, 
the group obtained from the canonical action of the symmetric group $S_m$ on $m$ copies of the cyclic group $\Z_2$ on two generators. By $H_m^+$ we denote its quantum analog  (see also Example \ref{CircleGraphs})
and we have $\QBan(\Gamma^{\sqcup m})=H_m^+$ in that case \cite{bichon}.

\begin{lem}\label{LemComponents}
Let $\Gamma_0=(V_0,E_0)$ be a connected graph and let $\Gamma_1=\Gamma_0^{\sqcup m}$. Let $\Gamma_2$ be a graph containing no copy of $\Gamma_0$ as a connected component. Let $\Gamma=\Gamma_1\sqcup\Gamma_2$ be the disjoint union of these two graphs. If $|V_0|\in\{1,2\}$, then
\[\Aut(\Gamma)=\Aut(\Gamma_1)\times \Aut(\Gamma_2)\qquad\textit{and}\qquad\QBan(\Gamma)=\QBan(\Gamma_1)*\QBan(\Gamma_2).\] 
\end{lem}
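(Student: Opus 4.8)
The plan is to treat the two equalities separately, starting from the classical one, which is easier and mostly bookkeeping about connected components. First I would recall that every graph automorphism permutes the connected components, and must send a component to an isomorphic one. Since $\Gamma_2$ contains no copy of $\Gamma_0$ as a component, no automorphism of $\Gamma=\Gamma_1\sqcup\Gamma_2$ can mix vertices of $\Gamma_1$ with vertices of $\Gamma_2$; hence $\Aut(\Gamma)$ splits as the direct product $\Aut(\Gamma_1)\times\Aut(\Gamma_2)$. This argument does not even need $|V_0|\in\{1,2\}$; that hypothesis is only needed for the quantum statement, and I would point this out.

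For the quantum side, the natural route is via the universal property of the free product of compact matrix quantum groups together with the description of $C(\QBan(\Gamma))$ by Relations (2.2)--(2.5). The inclusion $\QBan(\Gamma_1)*\QBan(\Gamma_2)\subseteq\QBan(\Gamma)$ is automatic: using the block-diagonal fundamental matrix $u=u^{(1)}\oplus u^{(2)}$ associated to the partition $V=V(\Gamma_1)\sqcup V(\Gamma_2)$, one checks that the defining relations of $C(\QBan(\Gamma_1))$ and $C(\QBan(\Gamma_2))$ are satisfied by the corresponding sub-blocks inside $C(\QBan(\Gamma))$, and since the off-block generators $u_{ij}$ (with $i,j$ in different pieces) will be forced to vanish, this gives a surjection $C(\QBan(\Gamma))\twoheadrightarrow C(\QBan(\Gamma_1))*C(\QBan(\Gamma_2))$. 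The content is the reverse inclusion: one must show that in $C(\QBan(\Gamma))$ all generators $u_{ij}$ with $i$ in a vertex of $\Gamma_1$ and $j$ in a vertex of $\Gamma_2$ (and vice versa) are zero, so that $u$ is forced into block-diagonal form and $C(\QBan(\Gamma))$ is generated by two commuting-in-the-free-product-sense blocks satisfying exactly the relations of $C(\QBan(\Gamma_1))$ and $C(\QBan(\Gamma_2))$.

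This vanishing is where the hypothesis $|V_0|\in\{1,2\}$ enters, and it is the main obstacle. When $|V_0|=1$, $\Gamma_1$ is a union of $m$ isolated vertices, and the vanishing of the cross terms is a known fact about quantum automorphism groups of graphs with isolated vertices (this is essentially the statement $\QBan(\Gamma\sqcup\{\mathrm{pt}\})=\QBan(\Gamma)*S_1^+=\QBan(\Gamma)$ combined with the isolated-vertices-only case $\QBan((\mathrm{pt})^{\sqcup m})=S_m^+$). When $|V_0|=2$, $\Gamma_1=\Gamma_0^{\sqcup m}$ is a disjoint union of $m$ edges, and the relevant fact is the computation $\QBan(\Gamma_0^{\sqcup m})=H_m^+$ from \cite{bichon}, together with a magic-unitary/Fulton-type argument forcing $u_{ij}=0$ when $i$ and $j$ do not have the same local structure (degree, and whether they lie in a $\Gamma_0$-component); the parameters one would track are vertex degrees and sizes of components, which are preserved by the coaction because $u$ commutes with the adjacency matrix and its powers. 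I would isolate this cross-vanishing as the key lemma, prove it by the Fulton criterion applied to suitable powers of the adjacency matrix of $\Gamma$ (the components of $\Gamma_1$ being "small" --- a point or an edge --- makes $\varepsilon^{(l)}_{ii}$ for $i\in\Gamma_1$ distinguishable from $\varepsilon^{(l)}_{jj}$ for $j\in\Gamma_2$, unless $\Gamma_2$ also contains such a component, which is excluded by hypothesis), and then assemble the two inclusions into the claimed equality via the universal property of the free product, exactly as in the free-product decomposition results of \cite{bichon}.
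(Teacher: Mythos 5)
Your proposal is correct and follows essentially the same route as the paper: there, too, the heart of the argument is to force the cross-terms $u_{ij}=0$ (for $i$ a vertex of $\Gamma_1$ and $j$ a vertex of $\Gamma_2$) via the Fulton criterion, comparing the diagonal entries of $\epsilon^2$ when $|V_0|=1$ and of $\epsilon^4$ when $|V_0|=2$ (the only detail your sketch leaves implicit is this explicit choice of powers and the short case analysis behind it), after which the block-diagonal form of $u$ yields the free-product decomposition exactly as you assemble it. Your treatment of the classical equality by permutation of connected components (valid for any $|V_0|$, as you note) is a minor, harmless variation; the paper instead deduces $\sigma_{ij}=0$ from the same Fulton-type computation via Remark \ref{epsilonForAut}.
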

\begin{proof}
Consider $\Gamma=(V,E)$ and denote by $V_k\subset V$ the vertices coming from $\Gamma_k$, for $k=1,2$. Let $i\in V_1$ and $j\in V_2$. We are going to apply the  criterion  from Lemma \ref{epsilon} and Remark \ref{epsilonForAut} in order to show that $u_{ij}=0$ resp. $\sigma_{ij}=0$ for $\sigma\in\Aut(\Gamma)$. This then proves the decomposition results, since we then have $u=u_1\oplus u_2$ for the matrix $u$ of $\Aut(\Gamma)$ and $\QBan(\Gamma)$ respectively. This implies that we find homomorphisms between the corresponding $C^*$-algebras, which are inverse to each other.
 Hence, all  we need to do is to find some $l$ such that $\epsilon_{ii}^{(l)}\neq \epsilon_{jj}^{(l)}$.

In the case $|V_0|=1$, the vertex $i\in V_1$ is not connected to any other vertex. Thus  there are no paths of length $l$ from $i$ to $i$ and we infer $\epsilon_{ii}^{(l)}=0$ for all $l\geq 1$. As for $j\in V_2$, this vertex is connected to at least another vertex $k\in V_2$, since $\Gamma_2$ does not contain any copy of $\Gamma_0$, i.e. it does not contain isolated points, if $|V_0|=1$. Hence, $(j,k,j)$ is a path of length two going from $j$ to $j$, which shows $\epsilon_{jj}^{(2)}\geq 1$ and hence $\epsilon_{ii}^{(2)}\neq \epsilon_{jj}^{(2)}$.

In the case $|V_0|=2$, the vertex $i\in V_1$ is connected to exactly one other vertex. Thus, $\epsilon_{ii}^{(4)}=1$. As for $j\in V_2$ however, this vertex is either connected to no vertex at all -- in which case $\epsilon_{jj}^{(4)}=0$ -- or it is connected to another vertex $k\in V_2$. Now, either $j$ or $k$ is connected to a third vertex $t\in V_2$ since $\Gamma_2$ does not contain any copy of $\Gamma_0$. From this, we may deduce $\epsilon_{jj}^{(4)}\geq 2$ and summarizing, $\epsilon_{ii}^{(4)}\neq \epsilon_{jj}^{(4)}$.
\end{proof}

It should be possible to push further the above considerations, for larger cardinalities $|V_0|$; however, a general decomposition statement for the quantum automorphism group must fail as there are graphs $\Gamma_1$ and $\Gamma_2$ which are quantum isomorphic but not isomorphic, compare the work in \cite{LMR}. In this case, we have
\[\Aut(\Gamma)=\Aut(\Gamma_1)\times \Aut(\Gamma_2)\qquad\textnormal{but}\qquad\QBan(\Gamma)\neq\QBan(\Gamma_1)* \QBan(\Gamma_2).\]

\subsection{No quantum symmetries for graphs with small automorphism groups}

We may derive the following consequence from the above data.

\begin{thm}
        \label{mainThm}
        Let $\Gamma$ be an undirected graph on $n\leq 7$ vertices having no multiple edges and no loops. Then:
        \begin{displaymath}
                \Aut(\Gamma) = \mathbb{Z}_2 \qquad \Rightarrow \qquad \QBan\left( \Gamma \right) = \mathbb{Z}_2
        \end{displaymath}
        \begin{displaymath}
                \Aut(\Gamma) = \left\{  e\right\}\qquad  \Rightarrow \qquad \QBan\left( \Gamma \right) = \left\{ e\right\}
        \end{displaymath}
\end{thm}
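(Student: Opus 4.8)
The statement is a corollary of the data in Table~1 together with the reduction results already obtained (Lemmata \ref{LemLink} and \ref{LemComponents}), so the plan is twofold: first settle connected graphs directly from the table, and then reduce an arbitrary, possibly disconnected, graph on at most seven vertices to the connected case.

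First I would record the connected case. If $\Gamma$ is connected on $n\le 7$ vertices with $|\Aut(\Gamma)|\in\{1,2\}$, then the Disjoint Automorphism Criterion (Lemma \ref{disjAutom}) cannot apply, since it requires two distinct non-trivial automorphisms. Hence, by the discussion in Section \ref{Data} (for $n\le 6$ via the matching of the two criteria, and for $n=7$ by the explicit run of \textsc{QSym}), the algorithm \textsc{QSym} returns $1$ on $\Gamma$; by Lemma \ref{LemLink} this means $C(\QBan(\Gamma))$ is commutative, so $\QBan(\Gamma)=\Aut(\Gamma)$. In particular $\Aut(\Gamma)=\{e\}$ forces $\QBan(\Gamma)=\{e\}$, and $\Aut(\Gamma)=\Z_2$ forces $\QBan(\Gamma)=\Z_2$.

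Next I would handle disconnected $\Gamma$ on $n\le 7$ vertices. The structural input is the classical fact that a connected asymmetric graph has either one vertex or at least six vertices, together with the identity $\Aut(\bigsqcup_i \Delta_i^{\sqcup m_i}) = \prod_i (\Aut(\Delta_i)\wr S_{m_i})$ for pairwise non-isomorphic connected components $\Delta_i$. A short case analysis on the component decomposition (bounding the number of components, the number of isolated vertices, and the number of components with at least six vertices) shows that the only disconnected graphs on at most seven vertices with $|\Aut(\Gamma)|\in\{1,2\}$ are: (a) the two isolated vertices $K_1\sqcup K_1$, for which $\QBan(\Gamma)=S_2^+=S_2=\Z_2=\Aut(\Gamma)$ since $S_2^+=S_2$; and (b) graphs of the form $\Delta\sqcup K_1$ with $\Delta$ connected on at most six vertices and $\Aut(\Delta)=\Aut(\Gamma)\in\{\{e\},\Z_2\}$. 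In case (b), $\Delta$ is connected on at least two vertices and hence has no isolated vertex, so Lemma \ref{LemComponents} applies with $\Gamma_0=K_1$, $m=1$, $\Gamma_2=\Delta$, giving $\QBan(\Gamma)=\QBan(K_1)*\QBan(\Delta)=\QBan(\Delta)$ and $\Aut(\Gamma)=\Aut(\Delta)$; the connected case just proved then finishes the argument.

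The main obstacle is making the component case analysis watertight rather than any single deep step: one must carefully bound how many isolated vertices can occur, exclude two disjoint non-trivial ``asymmetric'' pieces fitting on at most seven vertices, and verify that the hypotheses of Lemma \ref{LemComponents} genuinely hold (in particular that $\Delta$ contributes no further copy of $K_1$). Everything else is routine bookkeeping on top of Table~1 and Lemmata \ref{LemLink} and \ref{LemComponents}.
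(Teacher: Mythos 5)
Your proposal is correct, and its first half coincides with the paper's argument: for connected graphs the claim is read off from the computer verification (Table~1, the matching of the Disjoint Automorphism Criterion with \textsc{QSym} for $n\le 6$, the extra runs for $n=7$) together with Lemma~\ref{LemLink}. Where you genuinely diverge is the disconnected reduction. The paper argues by induction: if there is exactly one isolated vertex it is removed via Lemma~\ref{LemComponents}, and in all remaining disconnected cases it splits $\Gamma$ into two parts on at most five vertices, finds a non-trivial automorphism in each, and concludes $|\Aut(\Gamma)|\ge 4$, so these cases never occur under the hypothesis. You instead classify outright, using $\Aut(\bigsqcup_i\Delta_i^{\sqcup m_i})=\prod_i(\Aut(\Delta_i)\wr S_{m_i})$ and the fact that a connected asymmetric graph has one or at least six vertices, that the only disconnected graphs on $\le 7$ vertices with $|\Aut(\Gamma)|\le 2$ are $K_1\sqcup K_1$ and $\Delta\sqcup K_1$ with $\Delta$ connected, then apply Lemma~\ref{LemComponents} once. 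Your classification is correct (the appendix confirms the absence of asymmetric connected graphs on $2$--$5$ vertices), and it even buys something the paper's shortcut misses: the edgeless graph on two vertices has $\Aut=\Z_2$, so the paper's claim that the remaining disconnected cases force order at least four fails for exactly this graph; it is harmless only because $S_2^+=S_2$, which is precisely your case (a). Conversely, the paper's route is shorter and needs no component bookkeeping. One cosmetic caveat on your connected step: when $\Delta$ in case (b) has $2$ or $3$ vertices (e.g.\ $K_2\sqcup K_1$), the conclusion should be justified by $S_n=S_n^+$ for $n\le 3$ (as in Section~\ref{Data}) and for $n=4$ by the appendix resp.\ the citation of Schmidt's theorem, rather than by an actual \textsc{QSym} run, which the paper only reports for $n\ge 5$; this does not affect the validity of the argument.
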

\begin{proof}
We can rephrase the statement: If the order of $\Aut(\Gamma)$ is one or two, then the graph has no quantum symmetries.
For $n\leq 3$, we have $S_n=S_n^+$ and hence no graphs has quantum symmetries. For $n=4$, the rephrased statement follows from \cite[Thm. 3.8]{simon}. For $n\in\{5,6,7\}$, assume first that $\Gamma$ is connected. Then, the rephrased statement follows from our computer based verification, see Section \ref{Data}. 

Now, if $\Gamma$ is disconnected, firstly assume that we have exactly one isolated vertex. By Lemma \ref{LemComponents}, we have $\Aut(\Gamma)=\Aut(\Gamma')$ and $\QBan(\Gamma)=\QBan(\Gamma')$ where $\Gamma'$ is obtained from removing this isolated vertex. Hence, we are in the case of a graph on $n-1$ vertices and we may proceed inductively. Secondly, if $\Gamma$ does not have exactly one isolated vertex, it decomposes into two graphs $\Gamma_1$ and $\Gamma_2$ with no edges between them and both graphs having at most five vertices. Thus neither $\Aut(\Gamma_1)$ nor $\Aut(\Gamma_2)$ has order one (see also the appendix for a list of concrete graphs) and we find two disjoint automorphisms of $\Gamma$ in the sense of Section \ref{TwoCriteria}. This proves that $\Aut(\Gamma)$ has order at least four, so we don't need to take care of this case.
\end{proof}

In the case $\Aut(\Gamma) = \left\{  e\right\}$ the result from the above theorem may also be obtained by other means: Using the Weisfeiler-Lehman algorithm for computing the coherent algebras of asymmetric graphs, one can show that a trivial automorphism group implies the quantum automorphism group being trivial, for all graphs up to ten vertices, building on a recent result by Lupini, Mancinska and Roberson \cite{LMR}; we thank Luca Junk for performing these computations on the computer. However, the criterion on coherent algebras fails in the case  $\Aut(\Gamma) =\mathbb Z_2$.

We are wondering (see also Section \ref{SectObservations}) whether certain orders of automorphism groups are an obstruction for the existence of quantum symmetries: It might be the case that  $\Aut(\Gamma)\in\left\{\{e\},\Z_2,S_3\right\}$  implies $\QBan(\Gamma)=\Aut(\Gamma)$. We have to leave the question open, whether this holds in general.

\bibliography{ComputingQSym}

\begin{thebibliography}{GBVF01}

\bibitem[Ban05]{banica}
Teodor Banica.
\newblock Quantum automorphism groups of homogeneous graphs.
\newblock {\em J. Funct. Anal.}, 224(2):243--280, 2005.

\bibitem[Bic03]{bichon}
Julien Bichon.
\newblock Quantum automorphism groups of finite graphs.
\newblock {\em Proc. Amer. Math. Soc.}, 131(3):665--673, 2003.

\bibitem[Bla06]{blackadar}
B.~Blackadar.
\newblock {\em Operator algebras}, volume 122 of {\em Encyclopaedia of
  Mathematical Sciences}.
\newblock Springer-Verlag, Berlin, 2006.
\newblock Theory of $C^*$-algebras and von Neumann algebras, Operator Algebras
  and Non-commutative Geometry, III.

\bibitem[DGPS19]{singular}
Wolfram Decker, Gert-Martin Greuel, Gerhard Pfister, and Hans Sch\"onemann.
\newblock {\sc Singular} {4-1-2} --- {A} computer algebra system for polynomial
  computations.
\newblock \url{http://www.singular.uni-kl.de}, 2019.

\bibitem[Ful06]{fulton}
Melanie~B. Fulton.
\newblock The quantum automorphism group and undirected trees, 2006.
\newblock Thesis (Ph.D.)--Virginia Polytechnic Institute and State University.

\bibitem[GAP19]{gap}
The GAP~Group.
\newblock {\em {GAP -- Groups, Algorithms, and Programming, Version 4.10.1}},
  2019.

\bibitem[GBVF01]{graciabondia}
Jos\'{e}~M. Gracia-Bond\'{\i}a, Joseph~C. V\'{a}rilly, and H\'{e}ctor Figueroa.
\newblock {\em Elements of noncommutative geometry}.
\newblock Birkh\"{a}user Advanced Texts: Basler Lehrb\"{u}cher. [Birkh\"{a}user
  Advanced Texts: Basel Textbooks]. Birkh\"{a}user Boston, Inc., Boston, MA,
  2001.

\bibitem[LAZS19]{letterplace}
Viktor Levandovskyy, Karim Abou~Zeid, and Hans Sch\"onemann.
\newblock {\sc Singular:Letterplace} --- {A} {\sc singular} subsystem for
  non-commutative finitely presented algebras.
\newblock \url{http://www.singular.uni-kl.de}, 2019.

\bibitem[LMR17]{LMR}
Martino Lupini, Laura Mancinska, and David Roberson.
\newblock Nonlocal games and quantum permutation groups.
\newblock {\em arXiv:1712.01820}, 2017.

\bibitem[NT13]{NT}
Sergey Neshveyev and Lars Tuset.
\newblock {\em Compact quantum groups and their representation categories},
  volume~20 of {\em Cours Sp\'{e}cialis\'{e}s [Specialized Courses]}.
\newblock Soci\'{e}t\'{e} Math\'{e}matique de France, Paris, 2013.

\bibitem[Sch18]{simonFolded}
Simon Schmidt.
\newblock Quantum automorphisms of folded cube graphs.
\newblock {\em arXiv:1810.11284}, 2018.

\bibitem[Sch19]{SimonNeu}
Simon Schmidt.
\newblock On the quantum symmetry of distance-transitive graphs.
\newblock {\em arXiv:1906.06537}, 2019.

\bibitem[SW18]{simon}
Simon Schmidt and Moritz Weber.
\newblock Quantum symmetries of graph {$C^*$}-algebras.
\newblock {\em Canad. Math. Bull.}, 61(4):848--864, 2018.

\bibitem[SW19]{speicherweber}
Roland Speicher and Moritz Weber.
\newblock Quantum groups with partial commutation relations.
\newblock {\em to appear in Indiana Journal of Mathematics}, 2019.

\bibitem[Tim08]{Tim}
Thomas Timmermann.
\newblock {\em An invitation to quantum groups and duality}.
\newblock EMS Textbooks in Mathematics. European Mathematical Society (EMS),
  Z\"{u}rich, 2008.
\newblock From Hopf algebras to multiplicative unitaries and beyond.

\bibitem[Wan98]{wang}
Shuzhou Wang.
\newblock Quantum symmetry groups of finite spaces.
\newblock {\em Comm. Math. Phys.}, 195(1):195--211, 1998.

\bibitem[Web17]{weberindian}
Moritz Weber.
\newblock Introduction to compact (matrix) quantum groups and
  {B}anica-{S}peicher (easy) quantum groups.
\newblock {\em Proc. Indian Acad. Sci. Math. Sci.}, 127(5):881--933, 2017.

\bibitem[Wor87]{woronowicz1987}
S.~L. Woronowicz.
\newblock Compact matrix pseudogroups.
\newblock {\em Comm. Math. Phys.}, 111(4):613--665, 1987.

\end{thebibliography}
\bibliographystyle{alpha}

\section{Appendix: Lists of graphs on a small number of  vertices}\label{List}

We finish this article by listing all connected, undirected graphs on a small number of vertices having no multiple edges and no loops. In each case, we depict the graph, we state its automorphism group and its order, we give the information whether or not it is regular, we state the output value of the \textsc{QSym} algorithm in the form ``yes'' (output value 0) or  ``no'' (output value 1). Note that the computation of the Gr\"obner bases was always successful; for each graph the computations took less than an hour on our desktop computer. We also list the information whether or not the Disjoint Automorphism Criterion (Lemma \ref{disjAutom}) is satisfied; recall that ``disj auts: yes'' implies that the graph has quantum symmetries.

\subsection{List of graphs on four vertices}

\quad

\setlength\LTleft{-2cm}


\end{document}